\def\disp{\displaystyle}
\def\crr{\cr\noalign{\vskip2mm}}
\newtheorem{theorem}{Theorem}[section]
\numberwithin{equation}{section}
\newtheorem{definition}[theorem]{Definition}
\newtheorem{example}[theorem]{Example}
\newtheorem{remark}[theorem]{Remark}
\newenvironment{proof}[1][Proof]{\textbf{#1.}}{\ \rule{0.5em}{0.5em}}%
\begin{document}
\parindent 9mm
\title{Exponential Stability of a Class of Infinite Dimensional Coupled Systems\thanks{This work was supported by the Natural Science Foundation of Shaanxi Province (grant nos.
2018JM1051, 2014JQ1017), and the Fundamental Research Funds for the Central Universities (Grant no.
xjj2017177). }
\thanks{2010 Mathematics Subject Classification. 47D06; 93D15.}}
\author{ Zhan-Dong Mei
\thanks{School of Mathematics and Statistics, Xi'an Jiaotong
University, Xi'an 710049, China.  Email: zhdmei@mail.xjtu.edu.cn.  Telephone number: +86 02982663149.} }

%\thanks{This work was supported by NCET and in part by the NSFC under the contact 10531030}

\date{}
\maketitle \thispagestyle{empty}
\begin{abstract}
%
% Abstract is written here.
%
This paper is concerned with exponential stability of a class of infinite dimensional coupled systems.
It is proved that under some admissibility conditions, the considered infinite dimensional coupled system
is governed by a $C_0$-semigroup. Furthermore, if both the free subsystems are governed by exponentially stable $C_0$-semigroups, then
so is the coupled system. The results are applied to simplify the proof of semigroup generation and exponential stability for several coupled systems emerged in control theory literatures.

\vspace{0.5cm} %do not delete this line

%%%%%%%%%%%%%%%%%%%%%%%%%%%%%%%%%%%%%%%%%%%%%%%%%%%%%%%%%%%%%%%%%%%%%%%%%%%%%%
%
% Please write keywords (no more than five words)
%
\noindent {\bf Key words:} Exponential stability,  coupled systems,  admissible control, admissible observation, Riesz basis.
% no more than five words.

\end{abstract}

%%%%%%%%%%%%%%%%%%%%%%%%%%%%%%%%%%%%%%%%%%%%%%%%%%%%%%%%%%%%%%%%%%%%%%%%%%%%%%%

\section{Introduction and definitions}\label{section1}

As is well-known, in many infinite dimensional linear control systems, the closed-loop systems (or subsystems) under appropriate feedback laws are usually
described by coupled systems as follows
\begin{align}\label{couple}
    \left\{
      \begin{array}{ll}
        \dot{x}(t)=\mathcal{A}x(t)+BCy(t) \\
       \dot{y}(t)=\mathbf{A}y(t)
      \end{array}
    \right.
\end{align}
where $\mathcal{A}$ and $\mathbf{A}$ respectively generate $C_0$-semigroups on Banach spaces $\mathcal{X}$ and $\mathbf{X}$;
$C$ is a linear bounded operator from $D(\mathbf{A})$ to Banach space $U$ with $D(\mathbf{A})$ being Banach space equipped with
graph norm; $B$ is a linear bounded operator from $U$ to the extrapolation space  $\mathcal{X}_{-1}^\mathcal{A}$ of $\mathcal{X}$. By definition \cite{Engel2000}, the
extrapolation space $\mathcal{X}_{-1}^\mathcal{A}$ is the completion of $\mathcal{X}$ under the norm
$\|R(\lambda_0,\mathcal{A})\cdot\|$ with $R(\lambda_0,\mathcal{A})$ the resolvent of $\mathcal{A}$
at $\lambda_0$.  Hence the proofs of the well-posedness (semigroup generation) and exponential (or asymptotic) stability of couple system (\ref{couple}) are essentially important in control theory.

In \cite[(4.42)]{Feng2017b}, \cite[(3.1)]{Guo2007a} and  \cite[(3.1)]{Guo2008a}, the generations of semigroups and exponential stabilities of the coupled systems were proved by virtue of rather complicated Riesz basis approach. In order to verify that system \cite[(10)]{Krstic2008} generates
an exponentially stable $C_0$-semigroup, a quite skillful inner product was introduced and energy multiplier method was used, and the proof is rather length.

All the aforementioned references dealt with the coupled system just case by case, and the proof seems rather
complicated. Motivated by this, in this paper, our objective is to
develop a uniform fame to settle such problem. Concretely, we shall verify in Section 2 that under some considerable conditions system (\ref{couple}) is governed by an exponentially stable $C_0$-semigroup by pure semigroup method. In order to end this section, we introduce the the notions of admissible control and admissible observation, which will be used in next section.
\begin{definition}\cite{Weiss1989,Weiss1989b}
Assume that $X$ and $U$ are Banach spaces, and $A$ generates a $C_0$-semigroup $e^{At}$ on $X$.

(i).  We call $B\in L(U,X_{-1})$ to be admissible for $e^{At}$, if for a (and hence for all)   $t_0>0$   and $u\in L^2_{loc}(0,\infty;U)$, $\int_0^{t_0}e^{A(t-s)}Bu(s)ds\in X$ and
\begin{align*}
    \bigg\|\int_0^{t_0}e^{A(t-s)}Bu(s)ds\bigg\|\leq W_{A,B}(t_0) \bigg(\int_0^{t_0}\|u(s)\|^2ds\bigg)^{\frac{1}{2}},
\end{align*}
where $W_{A,B}:(0,\infty)\rightarrow [0,\infty)$ is a nondescreased function depending on $A$ and $B$.

(ii).  We call $C\in L(D(A),U)$ to be admissible for $e^{At}$, if for any $x\in D(A)$ and a (and hence for all) $t_0>0$, there
holds
\begin{align*}
    \int_0^{t_0}\|Ce^{As}x\|^2ds\leq V_{A,C}^2(t_0)\|x\|^2,
\end{align*}
where $V_{A,C}:(0,\infty)\rightarrow [0,\infty)$ is a nondescreased function depending on $A$ and $C$.
\end{definition}

\section{Main Results}

In this section, we shall present our main results: under some admissibility conditions, system (\ref{couple}) is governed by a $C_0$-semigroup; in addition,
if both the free subsystems are governed by exponentially stable $C_0$-semigroups, so is the coupled system.

\begin{theorem}\label{general}
Assume that $B\in L(U,\mathcal{X}^\mathcal{A}_{-1})$ and $C\in L(D(\mathbf{A}),U)$ are admissible for $e^{\mathcal{A}t}$ and $e^{\mathbf{A}t}$, respectively. Then, the operator $\mathfrak{A}=\left(
                                                    \begin{array}{cc}
                                                      \mathcal{A} & BC \\
                                                      0 & \mathbf{A} \\
                                                    \end{array}
                                                  \right)
$ with domain $D(\mathfrak{A})=\{(f,g)\in \mathcal{X}\times D(\mathbf{A}):\mathcal{A}f+BCg\in \mathcal{X}\}$
generates a $C_0$-semigroup. If, in addition, the semigroup $e^{\mathcal{A}t}$ and $e^{\mathbf{A}t}$ are exponentially
stable, then so is the semigroup $e^{\mathfrak{A}t}$.
\end{theorem}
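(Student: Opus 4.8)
The plan is to treat $\mathfrak{A}$ as the perturbation of the decoupled generator $\mathrm{diag}(\mathcal{A},\mathbf{A})$ by the off-diagonal term, and to read off the semigroup directly via a variation-of-parameters (Duhamel) formula rather than through any Riesz-basis machinery. First I would observe that since $\mathbf{A}$ generates $e^{\mathbf{A}t}$ on $\mathbf{X}$, the $y$-component is trivial: for $(f,g)\in D(\mathfrak{A})$ the second equation forces $y(t)=e^{\mathbf{A}t}g$, so the only real content is solving $\dot x(t)=\mathcal{A}x(t)+BC e^{\mathbf{A}t}g$ with $x(0)=f$. The natural candidate for the semigroup is therefore
\begin{align*}
e^{\mathfrak{A}t}\binom{f}{g}=\binom{e^{\mathcal{A}t}f+\int_0^t e^{\mathcal{A}(t-s)}BC e^{\mathbf{A}s}g\,ds}{e^{\mathbf{A}t}g}.
\end{align*}
The admissibility hypotheses are exactly what make this well-defined: $C$ admissible for $e^{\mathbf{A}t}$ gives $Ce^{\mathbf{A}\cdot}g\in L^2_{loc}(0,\infty;U)$ for $g\in D(\mathbf{A})$ (with norm controlled by $\|g\|$, hence the map extends), and $B$ admissible for $e^{\mathcal{A}t}$ then guarantees $\int_0^t e^{\mathcal{A}(t-s)}BC e^{\mathbf{A}s}g\,ds\in\mathcal{X}$ with a bound $\le W_{\mathcal{A},B}(t)V_{\mathbf{A},C}(t)\|g\|$. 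So the formula above defines a bounded operator on $\mathcal{X}\times\mathbf{X}$ for each $t$.

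Next I would verify the $C_0$-semigroup axioms for this family. Strong continuity at $t=0$ follows from strong continuity of $e^{\mathcal{A}t}$, $e^{\mathbf{A}t}$ together with the fact that the Duhamel integral is $o(1)$ as $t\downarrow 0$ (again from the admissibility estimate, since $W_{\mathcal{A},B}$ is bounded near $0$ and the $L^2$-norm of $Ce^{\mathbf{A}\cdot}g$ over $[0,t]$ tends to $0$). The semigroup property $e^{\mathfrak{A}(t+\tau)}=e^{\mathfrak{A}t}e^{\mathfrak{A}\tau}$ reduces, on the first component, to the cocycle identity for the Duhamel term, which is the standard splitting $\int_0^{t+\tau}=\int_0^{\tau}$-part propagated by $e^{\mathcal{A}t}$ plus $\int_\tau^{t+\tau}$-part, using $e^{\mathbf{A}(s+\tau)}=e^{\mathbf{A}s}e^{\mathbf{A}\tau}$; this is a routine change of variables. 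Then I would identify the generator of this family and check it coincides with $\mathfrak{A}$ on the stated domain: differentiating the formula at $t=0$ on a dense set and using that $\mathcal{A}f+BCg\in\mathcal{X}$ is precisely the condition for the derivative to exist in $\mathcal{X}$. One standard way to package this cleanly is to invoke the known well-posedness theory for $\dot x=\mathcal{A}x+Bu$ with admissible $B$ (the input map is bounded $L^2\to\mathcal{X}$), so that $x(\cdot)$ is a classical solution exactly when the data are compatible.

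For the exponential-stability assertion, assume $\|e^{\mathcal{A}t}\|\le M_1 e^{-\omega_1 t}$ and $\|e^{\mathbf{A}t}\|\le M_2 e^{-\omega_2 t}$ with $\omega_1,\omega_2>0$. The $y$-component decays at rate $\omega_2$ immediately. For the $x$-component the delicate point — and I expect this to be the main obstacle — is estimating the Duhamel integral $\int_0^t e^{\mathcal{A}(t-s)}BC e^{\mathbf{A}s}g\,ds$ uniformly in $t$ with exponential decay, because the admissibility bound $W_{\mathcal{A},B}(t_0)$ is only assumed nondecreasing, a priori unbounded as $t_0\to\infty$. The remedy is to cut $[0,t]$ into unit-length (or fixed length $\delta$) subintervals $[k,k+1]$: on each piece write the contribution as $e^{\mathcal{A}(t-k-1)}\int_k^{k+1}e^{\mathcal{A}(k+1-s)}BCe^{\mathbf{A}s}g\,ds$, bound the inner integral by $W_{\mathcal{A},B}(1)\,V_{\mathbf{A},C}(1)\,\sup_{s\in[k,k+1]}\|e^{\mathbf{A}s}g\|\le W_{\mathcal{A},B}(1)V_{\mathbf{A},C}(1)M_2 e^{-\omega_2 k}\|g\|$ (after pulling $e^{\mathbf{A}s}g$ out as $e^{\mathbf{A}(s-k)}e^{\mathbf{A}k}g$ and using admissibility of $C$ on $[0,1]$), then multiply by $\|e^{\mathcal{A}(t-k-1)}\|\le M_1 e^{-\omega_1(t-k-1)}$, and sum the resulting geometric-type series over $k=0,\dots,\lfloor t\rfloor$. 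The sum is dominated by $C e^{-\min(\omega_1,\omega_2)t}$ (with an extra polynomial factor $t$ only in the borderline case $\omega_1=\omega_2$, which is still absorbed into any slightly smaller exponent), giving $\|e^{\mathfrak{A}t}\|\le M e^{-\omega t}$ for suitable $M$ and any $0<\omega<\min(\omega_1,\omega_2)$. This completes the argument.
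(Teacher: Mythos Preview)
Your proposal is correct, and the overall architecture (write down the Duhamel formula, check it defines a $C_0$-semigroup, then estimate) matches the paper. The execution differs in two places worth noting.

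For the generation part, you verify the semigroup axioms by hand (strong continuity, cocycle identity, then differentiate to identify the generator). The paper instead computes the Laplace transform of the candidate family $\mathrm{S}(t)$ and observes that it equals $R(\lambda,\mathfrak{A})$, then invokes the Laplace-transform characterisation of $C_0$-semigroups (Arendt--Batty--Hieber--Neubrander). This bypasses the semigroup-law computation and, more importantly, the somewhat delicate step of showing that the generator domain is \emph{exactly} $D(\mathfrak{A})$; your sketch ``differentiate at $t=0$ on a dense set'' gives one inclusion easily but the reverse inclusion needs a line or two of care. Your route is more elementary but longer; the paper's is a one-line citation once the resolvent is written down.

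For exponential stability the two arguments are genuinely different. You chop $[0,t]$ into unit intervals, bound each piece by $W_{\mathcal{A},B}(1)V_{\mathbf{A},C}(1)M_2 e^{-\omega_2 k}$, propagate by $e^{\mathcal{A}(t-k-1)}$, and sum a geometric series. The paper instead uses the fact (from Weiss) that exponential stability forces the admissibility constants to be \emph{uniformly} bounded, $W_{\mathcal{A},B}(t)\le K$ and $V_{\mathbf{A},C}(t)\le N$ for all $t>0$, which already gives a uniform bound on the Duhamel term; to extract decay they then rescale, replacing $\mathcal{A},\mathbf{A}$ by $\gamma+\mathcal{A},\gamma+\mathbf{A}$ for $0<\gamma<\min(\omega_{\mathcal{A}},\omega_{\mathbf{A}})$, and apply the same uniform bound to the shifted (still exponentially stable) semigroups. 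Your argument is self-contained and avoids quoting the uniform-boundedness result; the paper's rescaling trick is shorter and avoids the bookkeeping of the geometric sum and the borderline $\omega_1=\omega_2$ case.
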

\begin{proof}\ \
~Since $C$ is admissible for $e^{\mathbf{A}t}$, it follows from \cite{Weiss1989b} that for any $g\in \mathbf{X}$, $e^{\mathbf{A}t}g\in D(C_\Lambda)$, a.e. $t\geq 0$ and $C_\Lambda e^{\mathbf{A}\cdot}g\in L^2_{loc}(0,\infty; U)$, where $\big(C_\Lambda, D(C_\Lambda)\big)$ is an extension of $(\mathbf{A},D(\mathbf{A}))$ defined by
\begin{align*}
    \left\{
      \begin{array}{ll}
        D(C_\Lambda)=\{g\in \mathbf{X}:\mbox{the limt}\lim_{\lambda\rightarrow +\infty}CR(\lambda,\mathbf{A})g\;\mbox{exists}\}, \crr
        C_\Lambda g=\lim_{\lambda\rightarrow +\infty}CR(\lambda,\mathbf{A})g,\; g\in
D(C_\Lambda).
      \end{array}
    \right.
\end{align*}
We define a family of operators $\{\mathrm{S}(t)\}_{t\geq 0}$ by
\begin{align*}
    \mathrm{S}(t)(f,g)^T=\left(
                         \begin{array}{c}
                           e^{\mathcal{A}t}f+\int_0^te^{\mathcal{A}(t-\sigma)}BC_\Lambda e^{\mathbf{A}\sigma}gd\sigma \\
                           e^{\mathbf{A}t}g \\
                         \end{array}
                       \right), \; \forall \;(f,g)\in \mathcal{X}\times \mathbf{X}.
\end{align*}
It is easily seen that $S(\cdot)$ is strongly continuous on $[0,\infty)$. Moreover, since $\mathcal{A}$ and $\mathbf{A}$ are generators of $C_0$-semigroups, for any $\lambda\in\rho(\mathcal{A})\cap\rho(\mathbf{A})$ and $(f,g)\in \mathcal{X}\times \mathbf{X}$ we have
\begin{align}\label{rujieshi}
    \int_0^\infty e^{-\lambda t}\mathrm{S}(t)(f,g)^Tdt=\left(
                                                         \begin{array}{cc}
                                                           R(\lambda,\mathcal{A})f+ R(\lambda,\mathcal{A})BCR(\lambda,\mathbf{A})g \\
                                                           R(\lambda,\mathbf{A})g \\
                                                         \end{array}
                                                       \right)=R(\lambda,\mathfrak{A})(f,g)^T.
\end{align}
By \cite[page 113, Theorem 3.17]{Arendt2001}, $S(t)$ is a $C_0$-semigroup generated by $\mathfrak{A}$.

Next, we shall prove that the semigroup $S(t)=e^{\mathfrak{A}t}$ is exponentially stable provided $e^{\mathcal{A}t}$ and $e^{\mathbf{A}t}$
are exponentially stable. By \cite{Weiss1989b}, the exponential stability of $e^{\mathbf{A}t}$ means that, there exist positive constants $M_\mathbf{A}$, $\omega_\mathbf{A}$ and $N$ such that
\begin{align}\label{N}
 \|e^{\mathbf{A}t}\|\leq M_\mathbf{A} e^{-\omega_\mathbf{A}t} ,\; \;V_{\mathbf{A},C}(t)\leq N,\; \forall\; t>0.
\end{align}
Next,  by \cite{Weiss1989}, the admissibility of $B$ for $e^{\mathcal{A}t}$ implies that
\begin{align}\label{biaoshi}
    \int_0^t e^{\mathcal{A}(t-\sigma)}BC_\Lambda e^{\mathbf{A}\sigma}gd\sigma \in \mathcal{X}, \; \forall \; g\in\mathbf{X}.
\end{align}
Moreover, since the semigroup $e^{\mathcal{A}t}$ is exponentially stable, by \cite{Weiss1989} there exist positive constants $M_\mathcal{A}$, $\omega_\mathcal{A}$ and $K$ such that
\begin{align}\label{K}
  \|e^{\mathcal{A}t}\|\leq M_\mathcal{A}e^{-\omega_\mathcal{A}t},\;\; W_{\mathcal{A},B}(t)\leq K,\; \forall\; t>0.
\end{align}
The combination of (\ref{N}), (\ref{biaoshi}) and (\ref{K}) produces
\begin{align}\label{eee}
   \bigg\| \int_0^t e^{\mathcal{A}(t-\sigma)}BC_\Lambda e^{\mathbf{A}\sigma}gd\sigma \bigg\|
\leq K\bigg(\int_0^t\|C_\Lambda e^{\mathbf{A}\sigma}g\|^2d\sigma\bigg)^{\frac{1}{2}}\leq KN\|g\|, \; \forall \; g\in\mathbf{X}.
\end{align}

Let $0<\gamma<\min\{\omega_{\mathcal{A}},\omega_{\mathbf{A}}\}$ and $(f,g)\in \mathcal{X}\times \mathbf{X}$. By bounded perturbation theorem of $C_0$-semigroups \cite[p.158]{Engel2000},
$\gamma+\mathcal{A}$ and $\gamma+\mathbf{A}$ generate $C_0$-semigroups $e^{(\gamma+\mathcal{A})t}=e^{\gamma t}e^{\mathcal{A}}$ and $e^{(\gamma+\mathbf{A})t}=e^{\gamma t}e^{\mathbf{A}}$, respectively. Moreover, $e^{(\gamma+\mathcal{A})t}$ and $e^{(\gamma+\mathbf{A})t}$ are exponentially stable and the following inequality,  similarly to (\ref{eee}), holds
\begin{align}\label{aaa}
    \bigg\|\int_0^te^{(\gamma+\mathbb{A}_0)(t-\mathcal{A}\sigma)}BC_\Lambda e^{(\gamma+\mathbf{A})\sigma}gd\sigma\bigg\|
\leq K_1N_1 \|g\|,\; \forall g\in\mathbf{X},
\end{align}
where $K_1$ and $N_1$ are positive constants similar to $K$ and $N$, respectively.
We compute
\begin{align*}
    \left\|e^{\gamma t}\left(e^{\mathcal{A}t}f+\int_0^te^{\mathcal{A}(t-\sigma)}BC_\Lambda e^{\mathcal{A}\sigma}gd\sigma\right)\right\|
=&\bigg\|e^{\gamma t}e^{\mathcal{A}t}f+\int_0^te^{\gamma (t-\sigma)}e^{\mathcal{A}(t-\sigma)}BC_\Lambda e^{\gamma \sigma}e^{\mathcal{A}\sigma}gd\sigma\bigg\|\crr
=&\bigg\|e^{(\gamma+\mathcal{A})t}f+\int_0^te^{(\gamma+\mathcal{A})(t-\sigma)}BC_\Lambda e^{(\gamma+\mathcal{A})\sigma}gd\sigma\bigg\|\crr
\leq &M_{\mathcal{A}}e^{-(\omega_{\mathcal{A}}-\gamma) t}\|f\|+K_1N_1\|g\|,
\end{align*}
which implies that
\begin{align*}
    \left\|e^{\mathcal{A}t}f+\int_0^te^{\mathcal{A}(t-\sigma)}BC_\Lambda e^{\mathbf{A}\sigma}gd\sigma\right\|
\leq \max\{M_{\mathcal{A}},K_1N_1\}e^{-\gamma t}(\|f\|+\|g\|).
\end{align*}
Therefore,
\begin{align*}
    \|\mathrm{S}(t)(f,g)\|\leq & M_{\mathbf{A}}e^{-\omega_{\mathbf{A}}t}\|g\|+\max\{M_{\mathcal{A}},K_1N_1\}e^{-\gamma t}(\|f\|+\|g\|)\crr
\leq & \max\{M_{\mathcal{A}}+M_{\mathbf{A}},K_1N_1+M_{\mathbf{A}}\}e^{-\gamma t}(\|f\|+\|g\|),
\end{align*}
which implies that $S(\cdot):[0,\infty)\rightarrow L(\mathcal{X}\times \mathbf{X})$ is exponentially stable. The proof is completed.
\end{proof}

\begin{remark}
We have to mention that the semigroup generation part of Theorem \ref{general} can also be obtained by the feedback theory of regular linear system \cite{Weiss1994}. The core idea of the proof: define $A_{ext} = diag ({\mathcal A}, A)$, $B_{ext} = ( B, 0)^T$, and  $C_{ext}= (0, C)$, and verify that $(A_{ext},B_{ext},C_{ext})$ forms a regular linear system with admissible feedback operator $I$, then $\mathfrak{A}$ is the closed-loop system and thereby generates a $C_0$-semigroup. In our Theorem \ref{general}, we use a direct and simple approach, pure semigroup method, other than feedback theory of regular linear system, and therefore the terms ``regular linear system" and ``admissible  feedback operator" are not involved.
\end{remark}

 The importance of Theorem \ref{general} lies in that it supplies us with a uniform frame to prove semigroup generation and exponential stability for many coupled partial differential equations, since Riesz basis approach as well as energy multiplier approach to coupled partial differential equations seems rather complicated. In the rest of this section, we shall present four examples for the applications of Theorem \ref{general}.

\begin{example}\label{exam1}
 In \cite[(3.1)]{Guo2008a}, the closed-loop system is governed  by
\begin{align}\label{2008a}
    \left\{\begin{array}{l}
\widehat{w}_{tt}(x,t)+\widehat{w}_{xxxx}(x,t)=0,\;\; x\in (0,1), \; t>0, \\
\widehat{w}(0,t)=\widehat{w}_{xx}(1,t)=0, \widehat{w}_{xxx}(1,t)=\gamma \widehat{w}_t(1,t)\; \;   t\ge 0,\\
\widehat{w}_{xx}(0,t)=c_2 \widehat{w}_{xt}(0,t)+c_3 \widehat{w}_{x}(0,t)+w_{xx}(0,t), \;\;  t\ge 0, \\
w_{tt}(x,t)+w_{xxxx}(x,t)=0,\;\; x\in (0,1), \; t>0, \\
w(0,t)=w_x(0,t)=w_{xx}(1,t)=0, \; \;   t\ge 0,\\
w_{xxx}(1,t)=c_1 \widehat{w}_t(1,t), \;\;  t\ge 0, \\
\end{array}\right.
\end{align}
which is equivalent to
\begin{align}\label{2008b}
    \left\{\begin{array}{l}
w_{tt}(x,t)+w_{xxxx}(x,t)=0,\;\; x\in (0,1), \; t>0, \\
w(0,t)=w_x(0,t)=w_{xx}(1,t)=0, \; \;   t\ge 0,\\
w_{xxx}(1,t)=c_1 w_t(1,t)+c_1 \varepsilon_t(1,t), \;\;  t\ge 0, \\
\varepsilon_{tt}(x,t)+\varepsilon_{xxxx}(x,t)=0,\;\; x\in (0,1), \; t>0, \\
\varepsilon(0,t)=\varepsilon_{xx}(1,t)=0, \varepsilon_{xxx}(1,t)=0\; \;   t\ge 0,\\
\varepsilon_{xx}(0,t)=c_2 \varepsilon_{xt}(0,t)+c_3 \varepsilon_{x}(0,t), \;\;  t\ge 0, \\
\end{array}\right.
\end{align}
where $\varepsilon (x,t)=\widehat{w}(x,t)-w(x,t)$, $c_1,c_2,c_3>0$.
Let $\mathbb{H} = H_E^2(0,1)\times L^2(0,1),  H_E^2(0,1)=\{f\in H^2(0, 1)| f(0) =f'(0) = 0\},$
with the inner product induced norm $\|(f,g)\|^2_\mathbb{H} = \int_0^1[|f''(x)|^2+|g(x)|^2]dx$
for any $(f,g)\in \mathbb{H}.$
Define
$$
\left\{\begin{array}{l}
\mathcal{A}(f,g)=(g,-f^{(4)}), \crr\disp
 D(\mathcal{A})=\{(f,g)\in \mathbb{H}| f''(1)=0,f'''(1)=c_1 g(1)\};
 \end{array}\right.
$$
and
$$
\left\{\begin{array}{l} \mathbf{A}(f,g)=(g,-f^{(4)}), \crr\disp
 D(\mathbf{A})=\{(f,g)\in \mathbf{X}|\mathbf{A}(f,g)\in \mathbf{X}, f''(1)=f'''(1)=0,f''(0)=c_2 g'(0)+c_3 f'(0)\}
 \end{array}\right.
$$
 where $\mathbf{X}=H^2_F(0,1)\times L^2(0,1)$, $H^2_F(0,1)=\{f\in H^2(0,1):f(0)=0\}$ with
inner product induce norm $\|(f,g)\|^2_\mathbf{X} = \int_0^1[|f''(x)|^2+|g(x)|^2]dx+c_3 |f'(0)|^2$
for any $(f,g)\in \mathbf{X}.$ Let
$$
B_1=(0,\delta(x-1))^\top \hbox{  and } C_1=(0,c_1 \delta(x-1))
$$
with $\delta$ being Dirac operator.
It is well known that both $\mathcal{A}$ and $\mathbf{A}$ are generators of exponentially stable $C_0$-semigroups on $\mathbb{H}$ and $\mathbf{X}$, respectively. It is easily seen that $B_1$ is admissible for $e^{\mathcal{A}t}$.
By \cite[Theorem 2.1]{Guo2008a}, $C_1$ is admissible for $e^{\mathbf{A}t}$.
Observe that the system operator corresponding to (\ref{2008b}) is $\mathfrak{A}_1=\left(
                                                    \begin{array}{cc}
                                                      \mathcal{A} & B_1C_1 \\
                                                      0 & \mathbf{A} \\
                                                    \end{array}
                                                  \right)
$ with domain $D(\mathfrak{A}_1)=\{(f,g)\in \mathbb{H}\times D(\mathbf{A}):\mathcal{A}f+B_1C_1g\in \mathbb{H}\}$.
By Theorem \ref{general},  the closed-loop system (\ref{2008b}) or (\ref{2008a}) is exponentially stable.
This  avoids the complicated Riesz basis generation procedure in \cite{Guo2008a}.
\end{example}

\begin{example}
The $(w,\widetilde{d})$ part of  \cite[(4.42)]{Feng2017b} is given by
\begin{align}\label{2017a}
    \left\{\begin{array}{l}
w_{tt}(x,t)+w_{xxxx}(x,t)=0,\;\; x\in (0,1), \; t>0, \\
w_{xxx}(0,t)=w_{xx}(0,t)=w(1,t)=0, \; \;   t\ge 0,\\
w_{xx}(1,t)=-c_2w_{xt}(1,t)-c_3w_x(1,t)+\widetilde{d}_{xx}(1,t), \;\;  t\ge 0, \\
\widetilde{d}_{tt}(x,t)+\widetilde{d}_{xxxx}(x,t)=0,\;\; x\in (0,1), \; t>0, \\
\widetilde{d}_{xxx}(0,t)=-c_1\widetilde{d}_t(0,t), \; \;   t\ge 0,\\
\widetilde{d}_{xx}(0,t)=\widetilde{d}(1,t)=\widetilde{d}_x(1,t)=0, \; \;   t\ge 0.\\
\end{array}\right.
\end{align}
Let $l(x,t)=w(1-x,t),\; \widetilde{p}(x,t)=\widetilde{d}(1-x,t)$. Let $c_1,c_2,c_3$, $\mathbf{A}$ and $\mathcal{A}$ be defined as in Example \ref{exam1}. Then,  system (\ref{2017a}) is equivalent to
\begin{align}\label{2017b}
    \left\{\begin{array}{l}
l_{tt}(x,t)+l_{xxxx}(x,t)=0,\;\; x\in (0,1), \; t>0, \\
l_{xxx}(1,t)=l_{xx}(1,t)=l(0,t)=0, \; \;   t\ge 0,\\
l_{xx}(0,t)=c_2l_{xt}(0,t)+c_3l_x(0,t)+\widetilde{p}_{xx}(0,t), \;\;  t\ge 0, \\
\widetilde{p}_{tt}(x,t)+\widetilde{p}_{xxxx}(x,t)=0,\;\; x\in (0,1), \; t>0, \\
\widetilde{p}_{xxx}(1,t)=c_1\widetilde{p}_t(1,t), \; \;   t\ge 0,\\
\widetilde{p}_{xx}(1,t)=\widetilde{p}(0,t)=\widetilde{p}_x(0,t)=0, \; \;   t\ge 0,\\
\end{array}\right.
\end{align}
whose system operator is $\mathfrak{A}_2=\left(
                                                    \begin{array}{cc}
                                                      \mathbf{A} & B_2C_2 \\
                                                      0 & \mathcal{A} \\
                                                    \end{array}
                                                  \right)
$ with domain $D(\mathfrak{A}_2)=\{(f,g)\in \mathbf{X}\times D(\mathcal{A}):\mathbf{A}f+B_2C_2g\in \mathbf{X}\},$
where $B_2=(0,\delta'(x))^T$ and $C_2=(\delta''(x),0)$. By \cite[Theorem 2.1]{Guo2008a}, $B_2$ is admissible for $e^{\mathbf{A}t}$.
It is routine to show that $C_2$ is admissible for $e^{\mathcal{A}t}$.
Therefore, we can apply Theorem \ref{general} to deduce that  system (\ref{2017b}) or (\ref{2017a}) is exponentially stable.
Once again,  we avoid proving the complicated Riesz basis generation procedure in \cite{Feng2017b}.
\end{example}

\begin{example}\label{example33}
In \cite[(2.6) and (3.4)]{Zhou2018}, the coupled system is described by
\begin{align}\label{zhou2018}
    \left\{
      \begin{array}{ll}
       \epsilon_{tt}(x,t)=\epsilon_{xx}(x,t), \\
        \epsilon_x(0,t)=c_2\epsilon_t(0,t)+\widetilde{d}_x(0,t), \\
        \epsilon_x(1,t)=-c_1\epsilon(1,t),\\
\widetilde{d}_{tt}(x,t)=\widetilde{d}_{xx}(x,t), \\
       \widetilde{d}_x(1,t)=-c_0\widetilde{d}_t(1,t), \widetilde{d}(0,t)=0,
      \end{array}
    \right.
\end{align}
where $c_0,c_1,c_2>0$. Let $\mathbb{H}_0 = H^1(0,1)\times L^2(0,1)\}$
with the inner product induced norm $\|(f,g)\|^2_{\mathbb{H}_0} = \int_0^1[|f'(x)|^2+|g(x)|^2]dx+c_1|f(1)|^2$
for any $(f,g)\in \mathbb{H}_0.$
Define
$$
\left\{\begin{array}{l}
\mathcal{A}(f,g)=(g,f''), \crr\disp
 D(\mathcal{A})=\{(f,g)\in \mathbb{H}_0| f'(0)=c_2g(0),f'(1)=-c_1f(1)\};
 \end{array}\right.
$$
and
$$
\left\{\begin{array}{l} \mathbf{A}(f,g)=(g,f''), \crr\disp
 D(\mathbf{A})=\{(f,g)\in \mathbf{X}|\mathbf{A}(f,g)\in \mathbf{X}, f'(1)=-c_0f(1)\}
 \end{array}\right.
$$
 where $\mathbf{X}=H^1_F(0,1)\times L^2(0,1)$, $H^1_F(0,1)=\{f\in H^1(0,1):f(0)=0\}$ with
inner product induce norm $\|(f,g)\|^2_\mathbf{X} = \int_0^1[|f'(x)|^2+|g(x)|^2]dx$
for any $(f,g)\in \mathbf{X}.$ Let
$$
B_0=(0,\delta(x)) \hbox{  and } C_0=(\delta'(x),0)^T.
$$
It is well known that both $\mathcal{A}$ and $\mathbf{A}$ are generators of exponentially stable $C_0$-semigroups on $\mathbb{H}_0$ and $\mathbf{X}$, respectively. By \cite[Theorem 3.1]{Zhou2018}, it follows that $B_0$ is admissible for $e^{\mathbf{A}t}$ and
$C_0$ is admissible for $e^{\mathbf{A}t}$.
The system operator corresponding to (\ref{2008b}) is $\mathfrak{A}_1=\left(
                                                    \begin{array}{cc}
                                                      \mathcal{A} & B_0C_0 \\
                                                      0 & \mathbf{A} \\
                                                    \end{array}
                                                  \right)
$ with domain $D(\mathfrak{A}_1)=\{(f,g)\in \mathbb{H}_0\times D(\mathbf{A}):\mathcal{A}f+B_0C_0g\in \mathbb{H}_0\}$.
By Theorem \ref{general},  the closed-loop system (\ref{zhou2018}) is exponentially stable.
However, Zhou and Guo \cite{Zhou2018} only derived the asymptotic stability of system (\ref{zhou2018}), and
the exponential stability was obtained only in the case that $c_0=1$. Furthermore, by Theorem \ref{general},
the assumption $c_0=1$ in \cite[Theorem 3.2]{Zhou2018} can be removed. Therefore, we improve greatly the main results of \cite{Zhou2018}.
\end{example}

\begin{example}
In \cite[(10)]{Krstic2008}, the coupled system is described as follows
\begin{align}\label{Krstic2008}
    \left\{
      \begin{array}{ll}
       \widetilde{w}_{tt}(x,t)=\widetilde{w}_{xx}(x,t)+(c_1+q)e^{qx}[q\epsilon(0,t)+c_0\epsilon_t(0,t)], \\
        \widetilde{w}_x(0,t)=c_1\widetilde{w}(0,t)-[q\epsilon(0,t)+c_0\epsilon_t(0,t)], \\
        \widetilde{w}_x(1,t)=-c_2\widetilde{w}_t(1,t),\\
\epsilon_{tt}(x,t)=\epsilon_{xx}(x,t), \\
       \epsilon_x(0,t)=c_0\epsilon_t(0,t), \epsilon(1,t)=0,
      \end{array}
    \right.
\end{align}
where $q>0$; $c_0,c_1,c_2$ are the same as in Example \ref{example33}.
Let $w(x,t)=\widetilde{w}(1-x,t),d(x,t)=\epsilon(1-x,t)$. Then we transform (\ref{Krstic2008}) to
\begin{align}\label{Krstic2008a}
    \left\{
      \begin{array}{ll}
       w_{tt}(x,t)=w_{xx}(x,t)+(c_1+q)e^{q(1-x)}[q\epsilon(1,t)+c_0\epsilon_t(1,t)], \\
        w_x(1,t)=-c_1w(1,t)+[qd(1,t)+c_0d_t(1,t)], \\
        w_x(0,t)=c_2w_t(0,t),\\
d_{tt}(x,t)=d_{xx}(x,t), \\
       d_x(1,t)=-c_0d_t(1,t), d(0,t)=0,
      \end{array}
    \right.
\end{align}
Let $\mathbf{A}$ and $\mathcal{A}$ be defined as in Example \ref{example33}. Denote
$$
B=\left(
    \begin{array}{cc}
      (c_1+q)e^{q\cdot} & 0 \\
      0 &  \delta(x-1)\\
    \end{array}
  \right)=(B_1 ,B_2), \hbox{  and } C=\left(
                             \begin{array}{cc}
                               q\delta(x) & c_0\delta(x) \\
                               q\delta(x) & c_0\delta(x) \\
                             \end{array}
                           \right)=\left(
                                     \begin{array}{c}
                                       C_1 \\
                                       C_1 \\
                                     \end{array}
                                   \right)
.
$$
 Since $B_1$ is a linear bounded operator and $B_2$ is admissible for $e^{\mathcal{A}t}$, $B$ is admissible for $e^{\mathcal{A}t}$. By definition and the exponential stability of $e^{\mathbf{A}t}$, there exists a constant $M>0$ such that $|\epsilon(0,t)|\leq \frac{1}{c_1}\|(\epsilon (\cdot,t),\epsilon_t(\cdot,t))\|\leq \frac{M}{c_1}\|(\epsilon (\cdot,0),\epsilon_t(\cdot,0))\|$. Accordingly, $(q\delta(x),0)$ is admissible for $e^{\mathbf{A}t}$.
It is routine to verify that $(0,c_0\delta(x))$ is admissible for $e^{\mathbf{A}t}$. Hence $C$ is admissible for $e^{\mathbf{A}t}$.
The system operator corresponding to (\ref{Krstic2008a}) is $\mathfrak{A}_2=\left(
                                                    \begin{array}{cc}
                                                      \mathcal{A} & BC \\
                                                      0 & \mathbf{A} \\
                                                    \end{array}
                                                  \right)
$ with domain $D(\mathfrak{A}_2)=\{(f,g)\in \mathbb{H}_0\times D(\mathbf{A}):\mathcal{A}f+BCg\in \mathbb{H}_0\}$.
By Theorem \ref{general},  the closed-loop system (\ref{Krstic2008a}) or (\ref{Krstic2008}) is governed by and exponentially stable $C_0$-semigroup.
It is seen that in \cite{Krstic2008}, the authors proved the results by introduce a quite skillful equivalent inner product. Moreover, the proof in \cite{Krstic2008} is lengthy. Here the procedure of constructing skillful inner product
is removed and the proof is greatly simplified.
\end{example}

% that's all folks
\end{document}